\documentclass[10pt]{amsart}
\usepackage{caption}
\usepackage{subcaption}
\usepackage{enumerate,amssymb,  mathrsfs, graphicx}
\newtheorem{theorem}{Theorem}[section]
\newtheorem{lemma}[theorem]{Lemma}
\newtheorem*{lemma*}{Lemma}

\newtheorem{proposition}[theorem]{Proposition}

\theoremstyle{definition}

\newtheorem{remark}[theorem]{Remark}
\newtheorem{ques}{Question}[section]

\theoremstyle{remark}

\numberwithin{equation}{section}



\newcommand{\A}{\mathbb{A}}

\newcommand{\X}{\mathbb{X}}

\newcommand{\Y}{\mathbb{Y}}

\newcommand{\onto}{\xrightarrow[]{{}_{\!\!\textnormal{onto\,\,}\!\!}}}


\def\le{\leqslant}
\def\ge{\geqslant}

\def\XXint#1#2#3{{\setbox0=\hbox{$#1{#2#3}{\int}$}\vcenter{\hbox{$#2#3$}}\kern-.5\wd0}}

\def\XXiint#1#2#3{{\setbox0=\hbox{$#1{#2#3}{\iint}$}\vcenter{\hbox{$#2#3$}}\kern-.5\wd0}}

\begin{document}
\title{Radially symmetry of minimizers to the weighted $p-$Dirichlet energy}

\author[D. Kalaj]{David Kalaj}
\address{University of Montenegro, Faculty of natural sciences and mathematics, Podgorica, Cetinjski put b.b. 81000 Podgorica, Montenegro }
\email{davidk@ucg.ac.me}

\subjclass[2010]{Primary 35J60; Secondary  30C70}


\keywords{Variational integrals, harmonic mappings, energy-minimal deformations, Dirichlet-type energy. }


\maketitle

\begin{abstract}
Let $\mathbb{A}=\{z: r< |z|<R\}$ and $\A^\ast=\{z: r^\ast<|z|<R^\ast\}$ be annuli in the complex plane. Let $p\in[1,2]$ and assume that  $\mathcal{H}^{1,p}(\A,\A^*)$  is the class of Sobolev homeomorphisms between $\A$ and $\A^*$,  $h:\A\onto \A^*$. Then we consider the following Dirichlet type energy of $h$:  $$\mathscr{F}_p[h]=\int_{\A(1,r)}\frac{\|Dh\|^p}{|h|^p}, \  \ 1\le p\le 2.$$ We prove that this energy integral attains its minimum, and the minimum is a certain radial diffeomorphism $h:\A\onto \A^*$, provided a radial diffeomorphic minimizer exists. If $p>1$ then such diffeomorphism exists always. If $p=1$, then the conformal modulus of $\A^\ast$ must not be greater or equal to $\pi/2$. This curious phenomenon is opposite to the Nitsche type phenomenon known for the standard Dirichlet energy.
\end{abstract}

\section{Introduction} The general law of hyperelasticity tells us that there exists an energy integral
$ E[h] = \int_\X
E(x, h, Dh) dx$  where $E : \mathbb{X} \times \mathbb{Y} \times \mathbb{R}^{n\times n}\to \mathbb{R}$ is a given stored-energy function characterizing
mechanical properties of the material. Here  $\X$ and $\Y$ are nonempty bounded domains in $\mathbb{R}^n
, n > 2.$
The mathematical models of nonlinear elasticity have been first studied by Antman \cite{[2]}, Ball \cite{ball, [5]}, and Ciarlet \cite{[13]}. One of the interesting and important problems in nonlinear
elasticity is whether the radially symmetric minimizers are indeed global
minimizers of the given physically reasonable energy. This leads us to study energy minimal
homeomorphisms $h: \A
\onto \A^\ast$ of Sobolev class $\mathscr{W}^{1,2}$ between annuli
$\A = \A(r, R) = \{x \in\mathbb{R}^n: r < |x| < R\}$ and $\A^\ast
=\A(r_\ast, R_\ast) = \{x \in\mathbb{R}^n: r_\ast < |x| < R_\ast\}$.
Here $0 \le  r < R$ and $0 \le  r_\ast < R_\ast$ are the inner and outer radii of $\A$ and
$\A^\ast$. The variational approach to Geometric Function Theory \cite{atm1,atm}  makes this problem more important. Indeed, several papers are
devoted to understanding the expected radial symmetric properties see \cite{koski2018} and the references therein. Many times experimentally known
answers to practical problems have led us to the deeper study of such mathematically
challenging problems.  We seek to minimize the $p$-harmonic energy of mappings between two annuli in $\mathbb{R}^2$. We consider the modified Dirichlet energy $\mathscr{F}_p[f]=\int_{\A}\frac{\| Df\|^p}{|f|^p}$, $1\le p\le 2$ and minimize it.
\section{$p$-harmonic equation and statement of the main results }\label{sec-1}
For natural number $n$, let
$A=(a_{i,j})_{n\times n}\in \mathbb{R}^{n\times  n}$.
We use $A^{T}$ to denote the transpose of $A$.
The {\it Hilbert-Schmit norm}, also called the {\it Frobenius norm},
of $A$ is denoted by $\|A\|$,
where $$\|A\|^{2}=\sum_{1\leq i,j\leq n} \left| a_{i,j} \right|^2= \text{tr} [A^{T}A].$$

For $p\geq1$, we say that a mapping $h$ belongs to the class $\mathcal{W}^{1, p}(\mathbb{A},\A^\ast)$, if $h$ belongs to the
Sobolev space $\mathcal{W}^{1, p}(\mathbb{A})$ and maps $\mathbb{A}$ onto $\A^\ast$.
Let $h=(h^{1},\ldots,h^{n})$ belong to $\mathcal{W}^{1,p}(\mathbb{A},\A^\ast)$.
We denote the {\it Jacobian matrix} of $h$ at the point $x=(x_{1},\ldots,x_{n})$ by $Dh(x)$, where
$Dh(x)=\left(\frac{\partial h^{i}}{\partial x_{j}} \right)_{n\times n}\in \mathbb{R}^{n\times  n}$. Then
$$
\|Dh\|^{2}=\sum_{1\leq i,j\leq n}
\left|  \frac{\partial h^{i}}{\partial x_{j}}\right|^2.
$$
Here $\frac{\partial h^{i}}{\partial x_{j}}$ denotes the weak partial derivatives of $h^{i}$ with respect to $x_{j}$.
If $h$ is continuous and belongs to
$\mathcal{W}^{1, p}(\mathbb{A},\A^\ast)$ $(p\geq1)$,
then the weak and ordinary partial derivatives coincide a.e. in $\mathbb{A}$ (cf. \cite[Proposition 1.2]{rick}).
Let $h=\rho S$, where $S=\frac{h}{|h|}$ and $\rho=|h|$.
By \cite[Equality (3.2)]{kalaj2018}, we obtain that
$$
Dh(x)=\nabla \rho(x)\otimes S(x) +\rho \cdot DS(x)
$$
and
\begin{equation}\label{eq-1.1}
\|Dh(x)\|^{2}=|\nabla\rho(x)|^{2} +\rho^{2} \|DS(x)\|^{2},
\end{equation}
where $\nabla \rho$ denotes the gradient of $\rho$.

We say that $h:\mathbb{A} \rightarrow\mathbb{A}^{*} $ is a {\it  radial mapping},
 if $h(x)=\rho(|x|)\frac{x}{|x|}$ and if $\rho$ is real and positive function. We use $\mathcal{R}(\mathbb{A},\A^\ast)$  to denote the class of radial homeomorphisms in $\mathcal{W}^{1,p}(\mathbb{A},\A^\ast)$
and use $\mathcal{P}(\mathbb{A},\A^\ast)$ to denote the class of generalized radial homeomorphisms in $\mathcal{W}^{1,p}(\mathbb{A},\A^\ast)$.
We also use $\mathcal{H}(\mathbb{A},\A^\ast)$ to
denote the class of homeomorphisms in $\mathcal{W}^{1,p}(\mathbb{A},\A^\ast)$.

As it is said before, an important problems in nonlinear elasticity is whether the radially symmetric
minimizers are indeed global minimizers.
For example, Iwaniec, and Onninen \cite{iwon} discussed the minimizers of the
following two energy integrals:
$$\mathfrak{E}[h]=\int_{\mathbb{A}} \|Dh(x)\|^{n} dx
\quad\text{and}\quad
 \mathfrak{F}[h]=\int_{\mathbb{A}} \frac{\|Dh(x)\|^{n}}{|h(x)|^{n}} dx
 $$
among all homeomorphisms in $\mathcal{W}^{1,n}(\mathbb{A},\mathbb{A}^{*})$, respectively. The energy integral $\mathfrak{F}$ for $n=2$, has been considered previously by Astala, Iwaniec, and Martin in \cite{atm1}. Further such energy has been generalized in planar annuli by Kalaj in \cite{ka2019,klondon} and spatial annuli in \cite{kalaj2019}.
On the other hand,  Koski and Onninen \cite{koski2018} investigated the minimizers of the $p$-harmonic energy
$$\mathcal{E}_{p}[h]=\int_{\mathbb{A}} \|Dh(x)\|^{p} dx $$
among all homeomorphisms in $\mathcal{W}^{1,p}(\mathbb{A},\mathbb{A}^{*})$,
where $\mathbb{A}$ and $\mathbb{A}^{*}$ are planar annuli and $1\leq p<2$, provided the homeomorphisms fix the outer boundary.
Recently, Kalaj \cite{kalaj2018}  studied the Dirichlet-type energy $\mathscr{F}[h]$ among mappings in $\mathcal{H}(\mathbb{A},\A^\ast)$,
where
\begin{equation}\label{eq-1.2}
 \mathscr{F}[h]=\int_{\mathbb{A}} \frac{\|Dh(x)\|^{n-1}}{|h(x)|^{n-1}}dx.
 \end{equation}
For $n=3$, the author proved that the minimizers of $\mathscr{F}[h]$ are certain generalized radial diffeomorphism (cf. \cite[Theorem 1.1]{kalaj2018}).
Motivated by the case $n=3$, in \cite{kalaj2018} it was posed the following question.

\begin{ques}\label{con-1.1}
For $n\not=3$,
does the Dirichlet integral of $h\in \mathcal{H}(\mathbb{A},\mathbb{A}^{*}) $, i.e. the integral
$$
\mathscr{F}[h]=\int_{\mathbb{A} } \frac{\|Dh(x)\|^{n-1}}{|h(x)|^{n-1}}dx,
$$
achieve its minimum for generalized radial diffeomorphisms between annuli?
\end{ques}
Then in the subsequent paper by Kalaj and Chen \cite{calculus} was given the following answer.
\begin{theorem}\label{thm-1.2}
For $n \geq4$, we have
\begin{eqnarray*}
& &\inf_{h\in \mathcal{H}(\mathbb{A},\mathbb{A}^{*})}
 \mathscr{F}[h]
=\inf_{h\in \mathcal{P}(\mathbb{A},\mathbb{A}^{*})}
 \mathscr{F}[h]
\end{eqnarray*}
The last infimum is never attained.

\end{theorem}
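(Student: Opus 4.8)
The plan is to exploit the polar factorization $h=\rho S$ with $\rho=|h|$ and $S=h/|h|$ together with the identity \eqref{eq-1.1}, which rewrites the integrand in the scale–invariant form
$$\frac{\|Dh\|^{n-1}}{|h|^{n-1}}=\bigl(|\nabla\log\rho|^{2}+\|DS\|^{2}\bigr)^{\frac{n-1}{2}}.$$
The exponent $q=\tfrac{n-1}{2}$ satisfies $q\ge1$ for $n\ge3$ and, crucially, $q>1$ exactly when $n\ge4$: this is where the dimension enters. First I would invoke the superadditivity inequality $(A+B)^{q}\ge A^{q}+B^{q}$ ($A,B\ge0$, $q\ge1$), strict for $q>1$ unless $AB=0$, to obtain the pointwise bound
$$\frac{\|Dh\|^{n-1}}{|h|^{n-1}}\ \ge\ |\nabla\log\rho|^{n-1}+\|DS\|^{n-1}.$$
Integrating over $\A$ splits the energy into a radial and an angular piece, and the heart of the matter is to bound each sharply from below over the \emph{whole} class $\mathcal{H}(\A,\A^\ast)$, not merely over radial maps.

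For the radial piece I would keep only the radial part of the gradient and pass to $t=\log|x|$, $u_\xi(t)=\log\rho(e^{t}\xi)$. Since $h$ carries boundary spheres to boundary spheres, $u_\xi(\log r)=\log r_\ast$ and $u_\xi(\log R)=\log R_\ast$ for a.e.\ direction $\xi$, so Fubini and a one–dimensional minimization give $\int_{\A}|\nabla\log\rho|^{n-1}\,dx\ge \omega_{n-1}M$, where
$$M:=\min\Bigl\{\textstyle\int_{\log r}^{\log R}|\dot u|^{n-1}e^{t}\,dt:\ u(\log r)=\log r_\ast,\ u(\log R)=\log R_\ast\Bigr\},$$
attained by a smooth, strictly increasing $u_{0}$ (so $\dot u_{0}>0$ throughout). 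For the angular piece I would slice over the spheres $\{|x|=s\}$; because $h$ is a homeomorphism onto $\A^\ast$, each restriction $S|_{\{|x|=s\}}\colon\mathbb{S}^{n-1}\to\mathbb{S}^{n-1}$ has degree $\pm1$, so AM–GM ($\|DS\|^{n-1}\ge(n-1)^{\frac{n-1}{2}}|J_{S}|$) and $\int_{\mathbb{S}^{n-1}}|J_{S}|\ge\omega_{n-1}$ yield $\int_{\A}\|DS\|^{n-1}\,dx\ge(n-1)^{\frac{n-1}{2}}\omega_{n-1}(R-r)$. Adding the two gives $\mathscr{F}[h]\ge\mathcal{L}:=\omega_{n-1}M+(n-1)^{\frac{n-1}{2}}\omega_{n-1}(R-r)$ for every $h\in\mathcal{H}(\A,\A^\ast)$.

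That $\mathcal{L}$ is the common value I would confirm with a minimizing sequence inside $\mathcal{P}(\A,\A^\ast)$ of the form $h_{j}(x)=\rho_{0}(|x|)\,\Phi_{j}(x/|x|)$, where $\rho_{0}$ realizes $u_{0}$ and $\Phi_{j}$ are conformal (Möbius, degree one) self–maps of $\mathbb{S}^{n-1}$ whose Jacobians concentrate at a single point. On the bulk where $J_{\Phi_{j}}\to0$ the density tends to $|\dot u_{0}|^{n-1}$ and contributes $\omega_{n-1}M$, while the shrinking spike carries the entire conformal cost $(n-1)^{\frac{n-1}{2}}\omega_{n-1}(R-r)$, so $\mathscr{F}[h_{j}]\to\mathcal{L}$ and $\inf_{\mathcal{H}}\mathscr{F}=\inf_{\mathcal{P}}\mathscr{F}=\mathcal{L}$. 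Non–attainment then comes from the equality analysis: a minimizer would force equality in the superadditivity step a.e., hence $|\nabla\log\rho|\,\|DS\|=0$ a.e.; but equality in the radial bound forces $\partial_{r}\log\rho=\dot u_{0}/|x|\neq0$ a.e., so $\|DS\|=0$ a.e., contradicting that every sliced map $S|_{\{|x|=s\}}$ has degree $\pm1$ and therefore strictly positive energy. When $n=3$ the exponent is $1$, the superadditivity is an identity, the energy decouples \emph{exactly}, and both pieces are attained simultaneously (by $u_{0}$ and a conformal $S$) — precisely why the non–existence is confined to $n\ge4$.

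I expect the main obstacle to be the sharp lower bound over \emph{arbitrary} homeomorphisms rather than radial ones: justifying the slice–by–slice degree lower bound for $\|DS\|$ and the boundary values of $u_\xi$ in the Sobolev/trace setting, and verifying that the radial and angular bounds are simultaneously sharp only in the degenerate (non-homeomorphic) limit. Establishing those two facts rigorously is what turns the heuristic splitting into a genuine proof that the infimum is $\mathcal{L}$ and is never achieved.
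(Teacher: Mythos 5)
You should know at the outset that this paper contains no proof of the statement you were asked to prove: Theorem~\ref{thm-1.2} is quoted as background from the reference \cite{calculus} (Chen--Kalaj), and the present paper only uses it to motivate its two-dimensional results. So there is no in-paper argument to compare yours against; I can only assess your proposal on its own terms and against the general method of this circle of papers (polar decomposition, pointwise inequalities, free-Lagrangian/slice bounds), with which it is clearly consonant.

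On its own terms your outline is sound, and its key computations check out. The identity $\frac{\|Dh\|^{n-1}}{|h|^{n-1}}=\bigl(|\nabla\log\rho|^{2}+\|DS\|^{2}\bigr)^{(n-1)/2}$, the superadditivity bound, the one-dimensional radial minimization (whose minimizer has $\dot u_{0}=Ce^{-t/(n-2)}>0$, and is unique by strict convexity of $v\mapsto|v|^{n-1}$), and the slice-degree bound $\int_{\{|x|=s\}}\|D_{T}S\|^{n-1}\,d\sigma\ge(n-1)^{(n-1)/2}\omega_{n-1}$ for a.e.\ $s$ are all correct; so is the non-attainment argument (equality in the radial bound forces $|\nabla\log\rho|>0$ a.e., strict superadditivity for $(n-1)/2>1$ then forces $DS=0$ a.e., contradicting the degree bound). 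The one step you underweight is the minimizing sequence: the assertion that the shrinking spike ``carries the entire conformal cost'' is precisely where $n\ge4$ enters a \emph{second} time, not merely through strictness of superadditivity. Writing $\lambda_{j}$ for the conformal factor of $\Phi_{j}$, the cross terms you must kill are controlled by $\int_{\mathbb{S}^{n-1}}\lambda_{j}^{2}$ and $\int_{\mathbb{S}^{n-1}}\lambda_{j}^{\,n-3}$ (from $(A+B)^{q}-A^{q}-B^{q}\lesssim A^{q-1}B+AB^{q-1}$ with $A$ the bounded radial density and $B_{j}\sim\lambda_{j}^{2}$), and the concentration estimate $\int_{\mathbb{S}^{n-1}}\lambda_{j}^{\alpha}\to0$ holds exactly for $0<\alpha<n-1$; both exponents $2$ and $n-3$ lie in that range iff $n\ge4$. (At $n=3$ the exponent $2$ is the scale-invariant one, $\int\lambda_{j}^{2}=\int J_{\Phi_{j}}=\omega_{2}$ for all $j$ --- consistent with the fact that no concentration is needed there and the minimum is attained.) Once you supply that estimate, together with the standard facts you yourself flagged (degree formula for continuous $W^{1,n-1}$ maps of $(n-1)$-spheres, properness of homeomorphisms giving the boundary values of a.e.\ radial profile), your sketch becomes a complete and correct proof: both infima equal $\omega_{n-1}M+(n-1)^{(n-1)/2}\omega_{n-1}(R-r)$ and neither is attained.
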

In this paper, we consider the case of the $p-$energy Sobolev $\mathcal{W}^{1,p}$ homeomorphisms between annuli $\A$ and $\A^*$ in the complex plane.
Let $$\mathscr{F}_p[h]=\int_{\A(1,r)}\frac{\|Dh\|^p}{|h|^p}, \  \ 1\le p<2.$$ Then we seek the homeomorphisms $h$ of the class $\mathcal{W}^{1,p}$ which are furthermore assumed to preserve the order of the boundary components $|h(z)|\to $r when $|z|\to r^\ast$ and $|h(z)|\to R^\ast$ when $|z|\to R$. Such a class of Sobolev homeomorphisms with the above property is denoted by $\mathcal{H}^{1,p}(\A,\A^*)$ and we say that they are \emph{admissible homeomorphisms}.
Since we minimize the $\mathscr{F}_p$ energy in the class of homeomorphisms,  we can perform the inner variation of the independent
variable $z_\epsilon = z + \epsilon \tau(z)$, which leads to the system (see for example \cite{kalaj2018})
\begin{equation}\label{div0}\mathrm{div}\left(\frac{1}{|h|^p}\|Dh\|^{p-2}(Dh)^* Dh-\frac{1}{p|h|^p}\|Dh\|^p I\right)=0,\end{equation} where $$\mathrm{div}\left(
                                                                                                                      \begin{array}{cc}
                                                                                                                        a(x,y) & b(x,y) \\
                                                                                                                        c(x,y) & d(x,y) \\
                                                                                                                      \end{array}
                                                                                                                    \right):=\left(
                                                                                                                              \begin{array}{c}
                                                                                                                                a_x+b_y \\
                                                                                                                                c_x+d_y \\
                                                                                                                              \end{array}
                                                                                                                            \right).$$
                                                                                                                            Here $z=(x,y)$.
Our argument does not make direct use of the inner variational equation \eqref{div0}.
Some important facts that follow from \eqref{div0} are as follows.
\begin{enumerate}
\item If we assume that $h$ is radial, then \eqref{div0} reduces to the Euler-Lagrange equation \eqref{afte} below.
\item Further if $f$ is a solution of \eqref{div0} then so is $\tilde f=\frac{1}{f}$.
\item Let $f_1(z)=\frac{1}{r_\ast} f(r z)$. Then $f_1:\A(1,r_1)\onto \A(1,R_1)$, provided that $f:\A(r,R)\onto \A(r^\ast, R^\ast)$, where $R_1=R_\ast/r_\ast$ and $r_1=R/r$. Moreover, $f$ satisfies \eqref{div0} if and only if $f_1$ satisfies the same equation.
    \end{enumerate}
    This is why we reduce the problem to the annuli $\A=\A(1,r)$ and $\A^*=\A(1,R)$.
Now we formulate the main results.
\begin{theorem}\label{th1}  Let $\A$ and $\A^*$ be planar annuli and $1< p\le 2.$ Then  there exists a
radially symmetric mapping $h_\circ:\A\to \A^*$  such that
\begin{equation}\label{slice}
\min_{{\mathcal{H}}^{1,p}(\A,\A^*)} \mathscr{F}_p[h] =\mathscr{F}_p[h_\circ] .\end{equation}
The map $h_\circ$ is the unique minimizer, up to a rotation, in the class $\mathcal{H}^{1,p}(\A,\A^*)$. Furthermore, the
minimizer $h_\circ$ is a homeomorphism.
\end{theorem}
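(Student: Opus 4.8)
The plan is to strip away the weight $|h|^{-p}$ by taking a logarithm in the target and then passing to cylindrical coordinates in the domain; this turns $\mathscr{F}_p$ into a weighted $p$-Dirichlet energy on a flat cylinder whose minimizer can be identified by a calibration. By the normalization preceding the statement we may assume $\A=\A(1,r)$ and $\A^\ast=\A(1,R)$. Every admissible $h$ has $|h|\ge 1>0$, so $w=\log h$ is well defined and lies in $\mathcal{W}^{1,p}$, with single-valued imaginary part once we pass to the cylinder, because an order-preserving homeomorphism has winding number $1$ about the origin on each circle $|z|=t$. The differential of $\log$ at $h$ is multiplication by $1/h$, conformal of scaling factor $1/|h|$, so $\|Dw\|=\|Dh\|/|h|$ pointwise --- the identity already recorded in \eqref{eq-1.1}. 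Substituting $z=e^{s+i\theta}$, under which $\A(1,r)$ becomes the cylinder $C_r=(0,\log r)\times(\R/2\pi\Z)$, and accounting for the conformal factor $e^{2s}$, one obtains
\begin{equation*}
\mathscr{F}_p[h]=\int_{C_r}e^{(2-p)s}\,\|DW\|^p\,ds\,d\theta,\qquad W(s,\theta)=\log h\bigl(e^{s+i\theta}\bigr),
\end{equation*}
with $A=\re W=\log|h|$ and $B=\im W=\arg h$. Admissibility becomes the trace data $A=0$ on $\{s=0\}$ and $A=\log R$ on $\{s=\log r\}$, together with the winding relation $B(s,\theta+2\pi)=B(s,\theta)+2\pi$.

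Next I would construct the radial candidate $W_\circ=(A_\circ(s),\theta)$, where $A_\circ$ minimizes $\int_0^{\log r}(A'^2+1)^{p/2}e^{(2-p)s}\,ds$ over $A(0)=0$, $A(\log r)=\log R$. A first integral of its Euler--Lagrange equation reads $e^{(2-p)s}(A_\circ'^2+1)^{p/2-1}A_\circ'=c$. For $1<p\le 2$ the function $a\mapsto(a^2+1)^{p/2-1}a$ is a strictly increasing bijection of $\R$, so $A_\circ'$ is uniquely determined, keeps the sign of $c$, and there is a unique $c>0$ with $\int_0^{\log r}A_\circ'\,ds=\log R$; the resulting $A_\circ$ is smooth and strictly increasing, hence $h_\circ(z)=e^{A_\circ(\log|z|)}\,z/|z|$ is a radial diffeomorphism. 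This is exactly the step that breaks for $p=1$: there the bijection above degenerates into a bounded function, so large target moduli become unreachable --- the advertised Nitsche-type obstruction --- whereas for $p>1$ the candidate always exists.

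The core estimate is $\mathscr{F}_p[h]\ge\mathscr{F}_p[h_\circ]$ for every admissible $h$, proved by calibration. Convexity of $N\mapsto\|N\|^p$, strict for $p>1$, yields
\begin{equation*}
\|DW\|^p\ge\|DW_\circ\|^p+p\,\|DW_\circ\|^{p-2}\langle DW_\circ,DW-DW_\circ\rangle,
\end{equation*}
which is legitimate because $\|DW_\circ\|^2=A_\circ'^2+1\ge 1$. Multiplying by $e^{(2-p)s}$ and integrating, the linear term collects into
\begin{equation*}
\int_{C_r}\Phi_{11}(s)\,\partial_s(A-A_\circ)\,ds\,d\theta+\int_{C_r}\Phi_{22}(s)\,\partial_\theta(B-\theta)\,ds\,d\theta,
\end{equation*}
with $\Phi_{11}=p\,e^{(2-p)s}(A_\circ'^2+1)^{p/2-1}A_\circ'$ and $\Phi_{22}=p\,e^{(2-p)s}(A_\circ'^2+1)^{p/2-1}$. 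The second integral is zero because $\int_0^{2\pi}\partial_\theta(B-\theta)\,d\theta=0$ by the winding relation, and the first is zero because $\Phi_{11}\equiv pc$ is constant by the first integral above while $A-A_\circ$ vanishes on both boundary circles. Thus $\mathscr{F}_p[h]-\mathscr{F}_p[h_\circ]$ equals the integral of the nonnegative convexity gap, giving minimality; strict convexity forces $DW=DW_\circ$ a.e.\ in the equality case, and then the normalization gives $A\equiv A_\circ$ and $B=\theta+\mathrm{const}$, i.e.\ $h=e^{i\alpha}h_\circ$, which is uniqueness up to rotation.

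I expect the main obstacle to be not the calibration, which is essentially algebraic once the reduction is set up, but the rigor of that reduction for a merely Sobolev homeomorphism: one must verify that $\log h$ lifts to a single-valued $\mathcal{W}^{1,p}$ map of winding number exactly $1$, that the traces $A=0,\log R$ hold strongly enough to annihilate the boundary term, and that the one-dimensional fundamental theorem of calculus applies after Fubini on almost every radial line. These are precisely where being a homeomorphism and order-preserving is genuinely used (it furnishes the winding number and the boundary traces), while $p>1$ enters through strict convexity and the solvability of the radial ODE; I would lean on the established theory of monotone Sobolev homeomorphisms between annuli to supply the analytic details.
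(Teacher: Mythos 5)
Your calibration argument is sound, and it is a genuinely different route from the paper's. The paper never changes variables: it works in polar coordinates on the annulus, first constructing $h_\circ$ in Section 3 by reducing the Euler--Lagrange equation to the ODE \eqref{gdot} for an auxiliary function $g$ and exploiting the constancy of $P(t)=t^{2-p}(1-g(t))^{(1-p)/2}\sqrt{g(t)}$ in \eqref{PP}, and then proving the general lower bound by H\"older's inequality against the density $\|Dh_\circ\|^{p-1}/|h_\circ|^{p-1}$, the Koski--Onninen splitting inequality \eqref{koski} with $q=1$, and the free-Lagrangian bounds of Proposition~\ref{provo} ($\int_{t\mathbb{T}}|\nabla\Theta|\,|dz|\ge 2\pi$ and $\int_1^r |\nabla\rho|/\rho\,dt\ge\log R$). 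You replace all of this by a single supporting-hyperplane inequality for $N\mapsto\|N\|^p$ after the conformal change of variables, with the linear term annihilated exactly by the first integral plus trace and winding identities. Note that your first integral is literally the paper's constancy of $P$: under $t=e^s$ one has $A_\circ'=t\dot H/H=\sqrt{g/(1-g)}$, hence $e^{(2-p)s}(A_\circ'^2+1)^{p/2-1}A_\circ'=t^{2-p}(1-g)^{(1-p)/2}\sqrt{g}=P(t)$, so the two proofs share the same calibration constant. What your route buys: no H\"older step, no auxiliary inequality \eqref{koski}, uniqueness falls out of strict convexity rather than a separate equality analysis, the $p=1$ obstruction of Theorem~\ref{th2} becomes transparent (the map $a\mapsto(a^2+1)^{p/2-1}a$ stops being onto $\R$), and nothing essential uses $p\le 2$, so the argument would extend beyond that range. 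What the paper's route buys: its bounds involve $|\nabla\Theta|$ and $|\nabla\rho|$ with absolute values, so they are insensitive to orientation and winding sign.

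That last point is the one genuine gap in your write-up: your claim that every admissible homeomorphism has winding number $+1$ on each circle is false for orientation-reversing competitors, which do belong to $\mathcal{H}^{1,p}(\A,\A^*)$ as defined (they preserve the order of the boundary components but wind by $-1$). For such $h$ your angular term does not vanish; instead $\int_0^{2\pi}\partial_\theta(B-\theta)\,d\theta=-4\pi$, the linear term becomes strictly negative, and the calibration gives a bound weaker than $\mathscr{F}_p[h_\circ]$. The fix is one line --- replace $h$ by $\bar h$, which has the same energy and boundary behavior and winds by $+1$ --- but it must be said, and it also shows that uniqueness holds up to rotation \emph{and} conjugation (a caveat the paper's own equality analysis in Proposition~\ref{provo} also glosses over, since its $\varphi$ could be decreasing). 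The Sobolev-regularity issues you flag yourself (traces of $A$, the fundamental theorem of calculus on almost every line, winding on almost every circle) are real but standard: properness of the homeomorphism gives uniform convergence of $|h|$ to the boundary values, and the ACL property plus Fubini gives the rest; alternatively, invoke the reduction \eqref{infmin} to diffeomorphisms via \cite{18,23}, exactly as the paper does.
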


\begin{theorem}\label{th2}  Let $\A$ and $\A^*$ be planar annuli.  Then there exists a
radially symmetric mapping $h_\circ:\A\to \A^*$  which is a homeomorphism such that
\begin{equation}\label{slice1}
\min_{{\mathcal{H}}^{1,1}(\A,\A^*)}\mathscr{F}_1[h] =\mathscr{F}_p[h_\circ],\end{equation}  if and only if \begin{equation}\label{rR}
 {\frac{\pi }{2}-\tan^{-1}\left[\frac{1}{\sqrt{r^2-1}}\right]}\ge \log R.\end{equation}
The map $h_\circ$ is the unique minimizer, up to a rotation, in the class $\mathcal{H}^{1,1}(\A,\A^*)$.
\end{theorem}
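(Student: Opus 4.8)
The plan is to reduce the problem to a one–dimensional variational problem for the radial profile and then to establish a matching lower bound over all admissible homeomorphisms. First I would pass to polar coordinates $z=se^{i\theta}$ on $\A=\A(1,r)$ and write $h=\rho\,e^{i\phi}$ with $\rho=|h|$ and $\phi=\arg h$, viewed as functions of $(s,\theta)$. Refining \eqref{eq-1.1} one has $\|Dh\|^{2}=\rho_{s}^{2}+\rho^{2}\phi_{s}^{2}+s^{-2}\rho_{\theta}^{2}+s^{-2}\rho^{2}\phi_{\theta}^{2}$, and with the substitution $t=\log s$, $\psi=\log\rho$ (so $\psi_{t}=s\rho_{s}/\rho$) the energy obeys
\[\mathscr{F}_1[h]=\int_{0}^{2\pi}\!\!\int_{1}^{r}\frac{\sqrt{\rho_{s}^{2}+\rho^{2}\phi_{s}^{2}+s^{-2}\rho_{\theta}^{2}+s^{-2}\rho^{2}\phi_{\theta}^{2}}}{\rho}\,s\,ds\,d\theta\;\ge\;\int_{0}^{2\pi}\!\!\int_{0}^{\log r}e^{t}\sqrt{\psi_{t}^{2}+\phi_{\theta}^{2}}\,dt\,d\theta,\]
where I discard the nonnegative terms $\rho^{2}\phi_{s}^{2}$ and $s^{-2}\rho_{\theta}^{2}$ under the root. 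For a radial map these discarded terms vanish, $\phi_{\theta}\equiv1$, and with $\sigma:=\psi$ depending on $t$ only the energy is \emph{exactly} $2\pi\int_{0}^{\log r}e^{t}\sqrt{\dot\sigma^{2}+1}\,dt$ subject to $\sigma(0)=0$, $\sigma(\log r)=\log R$.

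Next I would turn the displayed lower bound into the same scalar functional evaluated at the angular average. Since $h$ is an order– and orientation–preserving Sobolev homeomorphism between the annuli, each circle $|z|=s$ maps to a curve winding once around $\A^\ast$, so $\int_{0}^{2\pi}\phi_{\theta}\,d\theta=2\pi$ for a.e.\ $t$. Writing $\bar\psi(t)=\frac{1}{2\pi}\int_{0}^{2\pi}\psi(t,\theta)\,d\theta$ and applying Jensen's inequality to the convex function $(a,b)\mapsto\sqrt{a^{2}+b^{2}}$ in $\theta$ gives
\[\frac{1}{2\pi}\int_{0}^{2\pi}\sqrt{\psi_{t}^{2}+\phi_{\theta}^{2}}\,d\theta\;\ge\;\sqrt{\Big(\tfrac{1}{2\pi}\!\int_{0}^{2\pi}\!\psi_{t}\,d\theta\Big)^{2}+\Big(\tfrac{1}{2\pi}\!\int_{0}^{2\pi}\!\phi_{\theta}\,d\theta\Big)^{2}}=\sqrt{\bar\psi'(t)^{2}+1}.\]
Hence $\mathscr{F}_1[h]\ge 2\pi\,I[\bar\psi]$, where $I[\sigma]=\int_{0}^{\log r}e^{t}\sqrt{\dot\sigma^{2}+1}\,dt$ and $\bar\psi(0)=0,\ \bar\psi(\log r)=\log R$. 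Thus the whole problem is bounded below by, and for radial maps equal to, the single functional $I$.

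I would then minimize $I$. As the integrand is independent of $\sigma$, the Euler–Lagrange equation integrates once to $e^{t}\dot\sigma/\sqrt{\dot\sigma^{2}+1}=C$, i.e.\ $\dot\sigma=C/\sqrt{e^{2t}-C^{2}}$, which is real on $[0,\log r]$ only for $0<C\le1$. Integrating and imposing the boundary data yields
\[\log R=\operatorname{arcsec}(r/C)-\operatorname{arcsec}(1/C)=:f(C).\]
A direct differentiation shows $f$ is strictly increasing on $(0,1]$ with $f(0^{+})=0$ and $f(1)=\operatorname{arcsec}(r)=\arctan\sqrt{r^{2}-1}=\tfrac{\pi}{2}-\arctan\!\big[1/\sqrt{r^{2}-1}\big]$, so $f(C)=\log R$ has a unique admissible root—yielding a genuine radial diffeomorphism—exactly when \eqref{rR} holds. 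A conceptual check of the same dichotomy: $e^{t}\sqrt{\dot\sigma^{2}+1}\,dt$ is the Euclidean length element of the image curve under $\zeta=e^{t+i\sigma}$, so $I[\sigma]$ is the length of a modulus–monotone curve in $\{1\le|\zeta|\le r\}$ from $\zeta=1$ to $\zeta=re^{i\log R}$; the minimizing straight chord stays modulus–increasing precisely when it leaves the unit circle outward, i.e.\ $r\cos(\log R)\ge1$, which is again \eqref{rR}.

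Finally I would assemble the dichotomy and uniqueness. When \eqref{rR} holds, the radial map $h_\circ$ built from the root $C$ realizes the common lower bound, hence minimizes $\mathscr{F}_1$; tracing the equality cases (the two discarded terms vanish, equality in Jensen forces $(\psi_t,\phi_\theta)$ to be $\theta$–independent, and $\bar\psi$ must be the unique minimizer of $I$) forces any minimizer to be radial, giving uniqueness up to rotation as in \eqref{slice1}. When \eqref{rR} fails, $I$ has no admissible minimizer—the extremal curve would have to hug $|\zeta|=1$, which is excluded for a strictly modulus–monotone profile—yet $\inf I$ is still approached by monotone profiles, whose radial homeomorphisms show $\inf_{\mathcal{H}^{1,1}(\A,\A^\ast)}\mathscr{F}_1=2\pi\inf I$ is not attained. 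The main obstacle is the lower–bound step: one must justify the degree identity $\int_0^{2\pi}\phi_\theta\,d\theta=2\pi$ and all manipulations with $\phi=\arg h$ at the low regularity $\mathcal{W}^{1,1}$, where $\phi$ is only defined modulo $2\pi$ and $h$ need not be smooth, and verify that the Jensen reduction and the equality analysis remain valid for merely Sobolev homeomorphisms; the delicate $p=1$ feature—that in the borderline case $\dot\sigma\to\infty$ at the inner boundary—must also be accommodated within the admissible class.
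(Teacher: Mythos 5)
Your proposal is correct in substance, but it reaches the result by a genuinely different route than the paper. The paper's proof of Theorem~\ref{th2} is ``the same as Theorem~\ref{th1}'' specialized to $p=1$: it uses the explicit radial Euler--Lagrange solution \eqref{generalH} (with first integral $g(t)=1/(b^2t^2)$, so that $t\sqrt{g(t)}=1/b$ is the constant $P(t)$ of \eqref{PP}), the pointwise calibration inequality \eqref{oni}, and the two free-Lagrangian bounds of Proposition~\ref{provo} (winding $\int_{t\mathbb{T}}|\nabla\Theta|\,|dz|\ge 2\pi$ on circles, stretching $\int\frac{|\nabla\rho|}{\rho}\,dt\ge\log R$ on rays); the existence dichotomy \eqref{rR} comes from Lemma~\ref{mela}, proved by differentiating the solution family in the parameter $b$. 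You instead discard the cross terms $\rho^2\phi_s^2$ and $s^{-2}\rho_\theta^2$, pass to logarithmic variables, and use the degree identity plus Jensen's inequality to collapse the whole problem onto the one-dimensional functional $I[\sigma]=\int e^t\sqrt{1+\dot\sigma^2}\,dt$, which you then solve geometrically: $I$ is the length of the modulus-monotone curve $\zeta=e^{t+i\sigma}$, so the minimizer is the straight chord from $1$ to $re^{i\log R}$, admissible exactly when $r\cos(\log R)\ge 1$, which is \eqref{rR}. Your constant $C$ is the paper's $1/b$ (the distance from the origin to the chord), and your picture yields the clean closed form $\mathscr{F}_1[h_\circ]=2\pi\,\lvert re^{i\log R}-1\rvert$, equivalent to the paper's formula. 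Two comparative points are worth noting. First, your route gives a genuinely stronger treatment of the ``only if'' direction: you show the infimum is not attained at all when \eqref{rR} fails (the limiting curve must hug $|\zeta|=1$), whereas the paper only shows via Lemma~\ref{mela} that no radial homeomorphism solves the Euler--Lagrange equation, leaving implicit the step from ``global radial minimizer'' to ``EL solution.'' Second, the paper's calibration machinery is uniform in $p\in(1,2]$ and reuses Proposition~\ref{provo} verbatim, while your Jensen reduction is tailored to the structure at hand (though it too would extend, with weight $e^{(2-p)t}$). The regularity caveats you flag --- the degree identity and Jensen step for merely $\mathcal{W}^{1,1}$ homeomorphisms, and the infinite radial derivative at the inner boundary in the borderline case of \eqref{rR} --- are real but are the same issues the paper faces and handles by the density of diffeomorphisms, \eqref{infmin}; note only that in the borderline case the minimizer is a homeomorphism, not a diffeomorphism up to the boundary, so your phrase ``genuine radial diffeomorphism'' should be weakened accordingly.
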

\begin{remark} Note that the case $p=2$ of Theorem~\ref{th1} has been already considered by Astala, Iwaniec, and Martin in \cite{atm1}.

 On the other hand side our result can be seen as a variation of minimization property of radial mappings of $p-$Dirichlet energy throughout Sobolev mappings from the unit ball $\mathbb{B}\subset \mathbb{R}^n$ onto the unit sphere $\mathbb{S}^{n-1}$, fixing the boundary. This is an old problem solved by several authors (see for example \cite{bcl}, \cite{jcb}, \cite{hmc}).

Furthermore, as was remarked before,
Koski and Onninen \cite{koski2018} have considered $\mathcal{E}_{p}$ energy and proved the minimization property, under a certain  constrain. Indeed, if we denote the outer boundary of $\A$ by $\partial_\circ \A$ and consider the subfamily of homomorphisms $\mathcal{H}_\circ=\{f\in\mathcal{H}^{1,p}(\A,\A^*): f(x)=\frac{R_\ast}{R} x, \  \text{ for } x\in \partial_\circ \A\}$, then the minimizer of $\mathcal{E}_{p}$ energy is a radial mapping $h(x)=\rho(x)\frac{x}{|x|}$ provided that $R$ and $r$ satisfies some inequality that depends on $p$ (\cite[Theorem~1.5]{koski2018}). In the same paper they proved that this constraint is crucial and there exists annuli, where the minimizer of  $\mathcal{E}_{p}$ is not a   radial mapping.
\end{remark}
\begin{remark}
 By virtue of the density of diffeomorphisms in $\mathcal{H}^{1,p}(\A,\A^\ast)$, see \cite{18,
23}, we can equivalently replace the admissible homeomorphisms by sense preserving
diffeomorphims. Indeed, for $p \ge 1$, we have
\begin{equation}\label{infmin}
\inf_{f\in \mathcal{H}^{1,p}(\A,\A^\ast)}\mathcal{E}_p[h] =
\inf_{f\in \mathrm{Diff}(\A,\A^\ast)}\mathcal{E}_p[h].\end{equation}
Here by $\mathrm{Diff}(\A,\A^\ast)$ we denote the class of orientation preserving diffeomorphisms
from $\A$ onto $\A^\ast$ which also preserve the order of the boundary components.
A similar result hold for the $\mathscr{F}_p$ energy. Indeed \begin{equation}\label{infmin}
\inf_{f\in \mathcal{H}^{1,p}(\A,\A^\ast)}\mathscr{F}_p[h] =
\inf_{f\in \mathrm{Diff}(\A,\A^\ast)}\mathscr{F}_p[h].\end{equation}
\end{remark}

\section{Radial minimizer of the energy $\mathscr{F}_p[h]$, $1<p<2$}
This section aims is to find the radial minimizer $h_\circ$ of $\mathscr{F}_p$ energy that maps annuli $\A(1,r)$ onto $\A(1,R)$ keeping the boundary order. Moreover, we will use that solution to prove the minimization property of $h_\circ$ in the class of all Sobolev homeomorphisms. Contrary to the case $p=1$, which will be considered later, we will not have any restriction on $r$ and $R$.
Assume that $h(z) = H(t) e^{i \theta}$, where $z=t e^{i\theta}$, where $H$ is a differentiable function and that $t\in[1,r]$, $\theta\in[0,2\pi]$. Then $$\|D h\|^2 = |h_t|^2 + \frac{|h_{\theta}|^2}{t^2}=\dot H(t)^2+\frac{H(t)^2}{t^2}.$$
Furthermore $$t\frac{\|Dh\|^p}{|h|^p} = t\left(\frac{1}{t^2}+\frac{ \dot H(t)^2}{H(t)^2}\right)^{p/2}.$$
Let $$L(t, H, \dot H)=t\left(\frac{1}{t^2}+\frac{ \dot H(t)^2}{H(t)^2}\right)^{p/2}.$$
Then Euler-Lagrange equation  $$L_H=\partial_t L_{\dot H},$$ can be written in the following form
\begin{equation}\label{afte}\ddot H= \frac{\dot H \left((p-3) H^3+t H^2 \dot H-t^2 H \dot H^2+(p-1) t^3 \dot H^3\right)}{t H^3+(p-1) t^3 H \dot H^2},\end{equation} where $H=H(t)$, $\dot H= H'(t)$ and $\ddot H=H''(t)$. Then by straightforward calculation \eqref{afte} can be reduced to the following differential equation
\begin{equation}\label{reduced}
\frac{t \dot H(t)}{H(t)}=\frac{\sqrt{g(t)}}{\sqrt{1-g(t)}},\end{equation} where $g$ is a solution to the following differential equation
\begin{equation}\label{gdot}\dot g(t)= F[t, g(t)]:=\frac{2 (2-p) (g(t)-1) g(t)}{t+(p-2) t g(t)}.\end{equation}
Show that $F<0$ provided that $t\ge 1$ and $g(t)\in (0,1)$. Namely $$t+(-2+p) t g(t)\ge t +(p-2)t = (p-1)t>0.$$ Since $2 (2-p) (g(t)-1) g(t)<0$ we infer that $g$ is a decreasing function.

The general solution of \eqref{gdot} is given by $g=k^{-1}$, where the function $k$ is defined by
\begin{equation}\label{inverseg}k(s)=b\exp\left({\frac{(p-1) \log(1-s)-\log s}{2 (2-p)}}\right),\end{equation} where $b$ is a positive constant and $s\in(0,1)$.

By \eqref{reduced} we infer that $H$ is given by \begin{equation}\label{explh}H(t)= C\exp\left[\int_1^t \frac{\sqrt{g(x)}}{\sqrt{1-g(x)} x} \, dx\right].\end{equation}

By using the change $t=k(s)$ in \eqref{explh} we obtain

\begin{equation}\label{change}H(t) = C \exp \left[\int_{g(t)}^{g(1)} \frac{\left(\frac{p-1}{1-s}+\frac{1}{s}\right) \sqrt{s}}{2 (2-p) \sqrt{1-s}} ds\right].\end{equation}
Since we seek increasing homeomorphic mappings $H:[1,r]\onto [1,R]$, we have the initial conditions $H(1)=1$ and $H(r)=R$.  Then $C=1$. Let $0<\tau<1$ and chose $b=b(\tau)$ so that  $$b=\exp\left(\frac{(p-1) \log(1-\tau)-\log \tau}{2 (p-2)}\right).$$ Denote the corresponding $g$ by $g_\tau$. Then we have $g_\tau(1)=\tau$.

Moreover by \eqref{inverseg} $$g_\tau\left[\exp\left({\frac{(p-1) \log(\frac{1-t}{1-\tau})-\log \frac{t}{\tau}}{2 (2-p)}}\right)\right]=t.$$ Define the function

$$\mathcal{R}(\tau)=\exp \left[\int_{g_\tau(r)}^{\tau} \frac{\left(\frac{p-1}{1-s}+\frac{1}{s}\right) \sqrt{s}}{2 (2-p) \sqrt{1-s}} dx\right].$$
Then we also define
$$H_\tau(t) =  \exp \left[\int_{g_\tau(t)}^{\tau} \frac{\left(\frac{p-1}{1-s}+\frac{1}{s}\right) \sqrt{s}}{2 (2-p) \sqrt{1-s}} ds\right].$$ Then $$H_\tau(1)=1$$ and
\begin{equation}\label{rRr}H_\tau (r)=\mathcal{R}(\tau).\end{equation}
Let us show that there is a unique $s_\circ=s(r,\tau)\in (0,\tau)$ such that $B(s_\circ)=0$, where $$B(s):={\frac{(p-1) \log(\frac{1-s}{1-\tau})-\log \frac{s}{\tau}}{2 (2-p)}}-\log r.$$ Note that $B$ is continuous, $B(\tau)=0$ and $B(0)=+\infty$. Moreover $$B'(s)=\frac{1+(-2+p) s}{2 (2-p) (-1+s) s}<0.$$ Thus there is a unique $s_\circ$ so that $B(s_\circ)=0$. Then $g_\tau(r)=s_\circ$. Since for $0<s<\tau$ and $p\in(1,2]$, we have $${\frac{ \log(\frac{1-s}{1-\tau})-\log \frac{s}{\tau}}{2 (2-p)}}\ge {\frac{(p-1) \log(\frac{1-s}{1-\tau})-\log \frac{s}{\tau}}{2 (2-p)}},$$ it follows that

$${\frac{ \log(\frac{1-s_\circ}{1-\tau})-\log \frac{s_\circ}{\tau}}{2 (2-p)}}-\log r\ge B(s_\circ)={\frac{(p-1) \log(\frac{1-s_\circ}{1-\tau})-\log \frac{s_\circ}{\tau}}{2 (2-p)}}-\log r=0.$$
Thus
\begin{equation}\label{estimatetc}0<s_\circ<\tau_\circ=\frac{1}{1+r^{4-2 p} \left(-1+\frac{1}{\tau}\right)}.\end{equation}
Then $$\mathcal{R}(\tau)=\exp \left[\int_{s_\circ}^{\tau} \frac{\left(\frac{p-1}{1-s}+\frac{1}{s}\right) \sqrt{s}}{2 (2-p) \sqrt{1-s}} ds\right].$$

Let us show now that, if $p>1$, then for every $R\in (1,+\infty)$, there is $\tau \in (0,1)$ so that $\mathcal{R}(\tau)=R$. It is clear that $\mathcal{R}$ is continuous and also it is clear that $\lim_{\tau \to 0} \mathcal{R}(\tau)=1$.  Let us show that $\lim_{\tau \to 1} \mathcal{R}(\tau)=+\infty$.  Observe that $0\le s\le \sqrt{s}\le 1$. Then from \eqref{estimatetc} we have that $$\mathcal{R}(\tau)\ge K(\tau),$$ where
$$K(\tau)=\exp \left[\int_{\tau_\circ}^{\tau} \frac{\left(\frac{p-1}{s-1}+\frac{1}{s}\right) s}{2 (2-p) \sqrt{1-s}} ds\right].$$
Then $K(\tau)=\exp(k(\tau)-k(\tau_0))$, where $$k(s)=\frac{3+p (s-2)-2 s}{(2-p) \sqrt{1-s}}.$$
Then $$\lim_{\tau \to 1^-}\sqrt{1-\tau} \log K(\tau)=\frac{(p-1) \left(r^2+r^p\right)}{(2-p) r^2}.$$ We notice that here is the moment where $p\in(1,2)$ is an important assumption.
In particular $\lim_{\tau\to 1} R(\tau)=\infty$. So there is $\tau=\tau(r, R)$ so that $\mathcal{R}(\tau)=R$. In view of \eqref{rRr}, we have constructed a smooth increasing mapping $H_\circ=H_{r,R}:[1,r]\to [1,R]$ so that $H(1)=1$ and $H(r)=R$. Let us show that
\begin{equation}\label{hcirc} h_\circ(z)= H(t)e^{i\theta}, \ \  z=t e^{i\theta},\end{equation} is the minimizer in the class of radial homeomorphisms between $\A$ and $\A^\ast$.

Assume now that $H: [1,r]\to [1,R]$ is any smooth homeomorphism and assume that $h(z)=H(t)e^{i\theta}$. Prove that
\begin{equation}\label{prove}\mathscr{F}_p[h]\ge \mathscr{F}_p[h_\circ].\end{equation}
We start from a simple inequality from \cite{koski2018} \begin{equation}\label{koski}
(a+b)^{q/2}\ge s^{1-q/2}a^{q/2}+(1-s)^{1-q/2}b^{q/2}, \ \ q\in[1,2],  \ \ s\in[0,1].
\end{equation}
By inserting $q=p$, $s=g(t)$, $$a=t^{\frac{2}{p}-2}, \ \  b=t^{2/p} \frac{\dot H^2}{H^2}$$ in \eqref{koski} we have
\begin{equation}\label{ejte}\begin{split}t \left(\frac{1}{t^2}+ \frac{\dot H^2}{H^2}\right)^{p/2}&= \left(t^{2/p-2}+ t^{2/p}\frac{\dot H^2}{H^2}\right)^{p/2}\\&\ge (1-g(t))^{1-p/2}t^{1-p}+g(t)^{1-p/2}t\frac{|\dot H|^p}{|H|^p}.\end{split}\end{equation}

The equality in \eqref{koski} is attained precisely when $$\frac{b}{a}= \frac{s}{1-s}$$ and thus  the equality is attained in \eqref{ejte} precisely when \begin{equation}\label{ocndi}\frac{t\dot H}{H}=\frac{\sqrt{g(t)}}{\sqrt{1-g(t)}}.\end{equation}
Then by \begin{equation}\label{preci}a^p\ge  p x^{p-1} a -(p-1) x^p,\end{equation}
where $a=\frac{\dot H(s)}{H(s)}$ and $x=\frac{\sqrt{g(s)}}{\sqrt{1-g(s)} s}$
we get
\begin{equation}\label{seconda} t\left(\frac{1}{t^2}+ \frac{\dot H(t)^2}{H(t)^2}\right)^{p/2}\ge t^{1-p}\frac{1-pg(t)}{(1-g(t))^{p/2}}+\frac{s^{2-p}\sqrt{g(t)}}{(1-g(t))^{(p-1)/2}}p \frac{\dot H}{H}.\end{equation}
Notice that, the condition \eqref{ocndi} is precisely satisfied when we have equality in \eqref{seconda}.

Define $$P(t) = t^{2-p} \left({1-g(t)}\right)^{\frac{1}{2} (1-p)} \sqrt{g(t)},$$ and show that it is a constant. \emph{This fact is crucial for our approach.}

By \eqref{gdot} we obtain that $$\frac{P'(t)}{P(t)}=\frac{2-p}{t}+\frac{(1+(p-2) g(t)) \dot g(t)}{2 (1-g(t)) g(t)}=0.$$ Thus
\begin{equation}\label{PP}
P(t)\equiv c=P(r)=r^{2-p} \left({1-g(r)}\right)^{\frac{1}{2} (1-p)} \sqrt{g(r)}.\end{equation}

Observe that $$g(r)=g_\tau(r)=c_\circ(r,\tau)=c_\circ(r, \tau(r,R)).$$ Thus $c=c(r,R)$.
Now we have \[\begin{split}\mathscr{F}_p[h] &=2\pi \int_1^rt\left(\frac{1}{t^2}+\frac{\dot H(t)^2}{H^2(t)}\right)^{p/2}dt \\&\ge 2\pi \int_1^r\left(t^{1-p}\frac{1-pg(t)}{(1-g(t))^{p/2}}+c(r,R) \frac{\dot H}{H}\right)dt
\\&=2\pi \int_1^r\left(t^{1-p}\frac{1-pg(t)}{(1-g(t))^{p/2}}\right) dt+2\pi\int_1^r c(r,R)\frac{\dot H(t)}{H(t)}dt
\\&=2\pi \int_1^r\left(t^{1-p}\frac{1-pg(t)}{(1-g(t))^{p/2}}\right) dt+2\pi c(r, R)\log R\\&=\mathscr{F}_p[h^\circ].
\end{split}
\]

\section{Radial minimizers for the case $p=1$}\label{sect}

The corresponding subintegral expression for the functional $\mathscr{F}_1[h]=\int_{\A(1,r)} \frac{|Df(z)|}{|f(z)|}$, for radial function $h(z)=H(t)e^{i\theta}$, $z=te^{i\theta}$ is given by  $$L(t, H, \dot H)=\left(1+\frac{t^2 \dot H(t)^2}{H(t)^2}\right)^{1/2}.$$ The corresponding differential equation \eqref{afte} for $p=1$ reduces to
\begin{equation}\label{incsol}\left(-t H(t) \dot H(t)^2+t^2 \dot H(t)^3+H(t)^2 \left(2 \dot H(t)+t \ddot H(t)\right)\right)=0\end{equation} which can be written in the following form  $$\frac{t\dot H(t)}{H(t)}  =\frac{\sqrt{g(t)}}{\sqrt{1-g(t)}}$$  where $g$ is a solution of the differential equation (see \eqref{gdot} for $p=1$):
\begin{equation}\label{eqgen}2 g(t)+t \dot g(t)=0.\end{equation} Then the general solution of \eqref{eqgen} is given by $g(t) = {b t^{-2}}.$ Then the solution of \eqref{incsol} is the solution of the equation $$\frac{t\dot H(t)}{H(t)}=\frac{1}{\sqrt{b^2 t^2-1}}$$ and it is given by $$H(t)=c \exp\left({-\cot^{-1}\left[\sqrt{b^2t^2-1 }\right]}\right).$$
If we let that $H(1)=1$ then
\begin{equation}\label{generalH}H(t)=\exp\left(\cot^{-1}\left[\sqrt{b^2 -1}\right]-\cot^{-1}\left[\sqrt{ b^2 t^2-1}\right]\right).\end{equation}
Here $b\ge 1$. Moreover, if we assume that $H(r)=R$, then after straightforward computations we get
$$b=\frac{\sqrt{\left(1+r^2-2 r \cos\log R\right)} \csc[\log R]}{r}.$$ The corresponding minimizer is denoted by $h_\circ(z)=H(r)e^{i\theta},$ $z=re^{i\theta}$.
Hence  $$\mathscr{F}[h]=2\pi \int_1 ^r \left(1+\frac{t^2(\dot H(t))^2}{H^2(t)}\right)^{1/2}dt \ge 2\pi\int_1^r \sqrt{1-\frac{1}{b^2 t^2}}+\frac{ \dot H(t)}{bH(t)}dt$$

Thus $$\mathscr{F}[h]\ge \mathscr{F}[h_\circ]$$ where $$\mathscr{F}[h_\circ]=2\pi \frac{-\sqrt{b^2-1}+\sqrt{b^2r^2-1}-\csc^{-1}\left[b\right]+\csc^{-1}\left[b r\right]}{b}+ \frac{2\pi \log R}{b}.$$
\begin{lemma}\label{mela}
It exists a radial homeomorphism $h: \A(1,r)\to \A(1,R)$ if and only if $${\frac{\pi }{2}-\tan^{-1}\left[\frac{1}{\sqrt{r^2-1}}\right]}>\log R.$$
\end{lemma}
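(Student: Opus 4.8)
The plan is to reduce the existence question to solving the single scalar equation $H_b(r)=R$ in the integration constant $b$ of the explicit solution \eqref{generalH}, and then to analyze the resulting one-parameter family of attainable outer radii. Every radial solution of the $p=1$ Euler--Lagrange equation \eqref{incsol} normalized by $H(1)=1$ is of the form \eqref{generalH}, so a radial homeomorphism $h(z)=H(t)e^{i\theta}$ onto $\A(1,R)$ exists exactly when the boundary condition $H_b(r)=R$ can be met for some admissible $b$. The admissible range is dictated by the auxiliary quantity $g(t)=1/(b^2t^2)$ through which the solution was obtained: for $\frac{t\dot H}{H}=\frac{1}{\sqrt{b^2t^2-1}}$ to define a genuine increasing radial homeomorphism on the closed interval $[1,r]$ one needs $g(t)\in(0,1)$ throughout, i.e. $b^2t^2>1$ for every $t\in[1,r]$; since $t$ is smallest at the inner radius, this is precisely $b>1$. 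The borderline $b=1$ gives $g(1)=1$ and an infinite radial stretch $\dot H(1)=\infty$ at $t=1$, and is therefore discarded; this is the origin of the strict inequality.

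Next I would study the attained outer radius as a function of the parameter,
\begin{equation*}
\Phi(b):=\log H_b(r)=\cot^{-1}\!\left(\sqrt{b^2-1}\right)-\cot^{-1}\!\left(\sqrt{b^2r^2-1}\right),\qquad b>1,
\end{equation*}
and show it is a continuous strictly decreasing bijection of $(1,\infty)$ onto the open interval $\left(0,\ \tfrac{\pi}{2}-\tan^{-1}\!\left[\tfrac{1}{\sqrt{r^2-1}}\right]\right)$. Monotonicity is immediate from
\begin{equation*}
\Phi'(b)=-\frac{1}{b\sqrt{b^2-1}}+\frac{1}{b\sqrt{b^2r^2-1}}<0,
\end{equation*}
which holds because $r>1$ forces $\sqrt{b^2r^2-1}>\sqrt{b^2-1}$. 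For the endpoints I would use $\cot^{-1}(x)=\tan^{-1}(1/x)$ together with $\cot^{-1}(0)=\tfrac{\pi}{2}$ to get $\lim_{b\to1^+}\Phi(b)=\tfrac{\pi}{2}-\cot^{-1}\!\left(\sqrt{r^2-1}\right)=\tfrac{\pi}{2}-\tan^{-1}\!\left[\tfrac{1}{\sqrt{r^2-1}}\right]$, while both $\cot^{-1}$ terms tend to $0$ as $b\to\infty$, giving $\lim_{b\to\infty}\Phi(b)=0$. Note the supremum $\tfrac{\pi}{2}-\tan^{-1}\!\left[\tfrac{1}{\sqrt{r^2-1}}\right]$ is the limiting value at $b=1$ and is not attained for $b>1$, which is exactly why the image is the open interval.

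With these two facts the lemma follows in both directions: the equation $\Phi(b)=\log R$ has a (necessarily unique, by strict monotonicity) solution $b>1$ if and only if $\log R$ lies in the open image interval, that is, if and only if $\tfrac{\pi}{2}-\tan^{-1}\!\left[\tfrac{1}{\sqrt{r^2-1}}\right]>\log R$; if the inequality fails, no admissible $b>1$ satisfies the boundary condition and no such radial homeomorphism onto $\A(1,R)$ exists. I expect the only genuinely delicate point to be the justification of the admissibility range $b>1$ --- that is, identifying $g(1)=1/b^2<1$ as the correct constraint and showing that the degenerate borderline $b=1$ (with infinite inner-boundary derivative) must be excluded. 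This is precisely what distinguishes the strict inequality here from the non-strict one of Theorem~\ref{th2}, where the corresponding degenerate map is still admitted as the minimizer. The monotonicity and endpoint computations, though the technical core, are routine once the parametrization by $b$ is fixed.
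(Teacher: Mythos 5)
Your core computation is essentially the paper's: both you and the author parametrize the normalized radial solutions \eqref{generalH} by $b$, differentiate in $b$ to obtain strict monotonicity of the attained outer radius, and identify the extremal value $R_\circ(r)=\exp\left(\frac{\pi}{2}-\tan^{-1}\left[\frac{1}{\sqrt{r^2-1}}\right]\right)$ at $b=1$ (you additionally compute the limit $b\to\infty$, which the paper leaves implicit). The genuine gap is in your treatment of the borderline case $b=1$, which you yourself single out as the delicate point. You discard $b=1$ on the grounds that $g(1)=1$ and $\dot H(1)=+\infty$ prevent $H$ from being a ``genuine increasing radial homeomorphism'' on $[1,r]$. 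That claim is false: a homeomorphism need not have finite one-sided derivatives (compare $x\mapsto\sqrt{x}$ on $[0,1]$). At $b=1$ the map $H(t)=\exp\left(\frac{\pi}{2}-\cot^{-1}\left[\sqrt{t^2-1}\right]\right)$ is continuous and strictly increasing on $[1,r]$, smooth with $\dot H>0$ on $(1,r]$, hence a homeomorphism onto $[1,R_\circ(r)]$, and in fact a diffeomorphism between the open annuli; moreover $\|Dh\|\sim (t-1)^{-1/2}$ near $t=1$ is integrable, so the corresponding radial map lies in $\mathcal{H}^{1,1}(\A,\A^*)$ and solves \eqref{incsol} on all of $\A(1,r)$. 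Consequently, at equality $\log R=\frac{\pi}{2}-\tan^{-1}\left[\frac{1}{\sqrt{r^2-1}}\right]$ a radial homeomorphic solution \emph{does} exist, and the ``only if'' direction of your argument fails exactly there.

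In fairness, the paper is internally inconsistent on this very point: the Lemma is stated with a strict inequality, yet the paper's own proof admits $b=1$ (``Here $b\ge 1$'') and concludes ``there is an increasing diffeomorphism \ldots if and only if $R\le R_\circ(r)$,'' which is the non-strict condition \eqref{rR} used in Theorem~\ref{th2}. The non-strict version is the correct one for the class of homeomorphisms (and for $\mathcal{H}^{1,1}$); the strict version would only be defensible if one demanded a diffeomorphism up to the closed inner boundary, i.e.\ $C^1$ regularity with finite derivative at $|z|=1$, which is not what the Lemma or the energy-minimization framework requires. So rather than manufacturing a reason to exclude $b=1$, your proof should include it and conclude with the non-strict inequality, in agreement with \eqref{rR}.
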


\begin{proof}
By differentiating \eqref{generalH} w.r.t. $b$ we get
$$\partial_b H(t)=\frac{\exp\left({\cot^{-1}\left[\sqrt{-1+b^2}\right]-\cot^{-1}\left[\sqrt{-1+b^2 t^2}\right]}\right) \left(-\frac{1}{\sqrt{-1+b^2}}+\frac{1}{\sqrt{-1+b^2 t^2}}\right)}{b}.$$ Hence $H$ is decreasing in $b$. The largest value is for $b=1$ and it is equal to

$$R_\circ(r):=\exp\left(\frac{\pi }{2}-\tan^{-1}\left[\frac{1}{\sqrt{r^2-1}}\right]\right)$$ for $t=r$. In other words, there is a increasing diffeomorphism of $[1,r]$ onto $[1,R]$ if and only if $R\le R_\circ(r).$
\end{proof}
\begin{remark}
Observe that $\lim_{r\to \infty} \mathcal{R}(r)=e^{\pi/2}$, so there is not any homeomorphic minimizer of the $\mathscr{F}$ between annuli $\A(1,r)$ and $\A(1,e^{\pi/2})$. Note that the conformal modulus of $\mod{\A(1,e^{\pi/2})}$ is $\log e^{\pi/2}=\pi/2$.  So the case $p=1$ differs from the case $p>1$. Moreover, this case is also opposite to the Nitsche type phenomenon for Dirichlet energy $\mathcal{E}$. Namely Nitsche type phenomenon asserts that $R$ could be arbitrarily large, but not small enough.
\end{remark}

\begin{figure}
\centering
\includegraphics{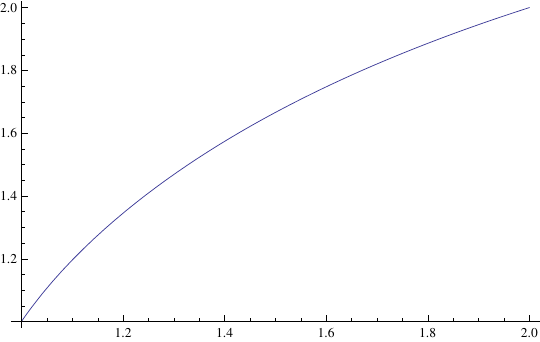}
\caption{The graphic of $H_\circ$ satisfying initial conditions $H(1)=1$, $H(2)=2$ is far from being identity. }
\label{21}
\end{figure}

\section{Proof of Theorem~\ref{th1} and Theorem~\ref{th2}}
We begin with the following proposition
\begin{proposition}\label{provo}

Assume that $h=\rho(z)e^{i\Theta(z)}$ is a diffeomorphism between annuli $\mathbb{A}(1,r)$ and $\mathbb{A}(1,R)$. Then for every $t\in[1,r]$ and $\theta\in [0,2\pi]$ we have
\begin{equation}\label{Theta}
\int_{t\mathbb{T}}|\nabla \Theta(z)||dz|\ge 2\pi.
\end{equation}
If the equality hold in \eqref{Theta} for every $\theta\in[0,2\pi]$, then $\Theta(z) =e^{i\varphi(\theta)},$ $z=te^{i\theta}$, for a diffeomorphism $\varphi:[0,2\pi]\onto[\alpha, 2\pi+\alpha]$.
Further, we have
\begin{equation}\label{Rho}
\int_{1}^{R}\frac{|\nabla \rho(te^{i\theta}) |}{\rho(te^{i\theta})}dt\ge \log R.
\end{equation}
If the equality hold in \eqref{Rho} for every $t\in[1,R]$, then $\rho(te^{i\theta}) =\rho(t)$.

\end{proposition}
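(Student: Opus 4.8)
The plan is to treat both inequalities by a single mechanism: in the polar frame the Euclidean gradient controls each of its two coordinate components, and integrating the distinguished component against the fundamental theorem of calculus (or against a degree count) reproduces exactly the boundary data appearing on the right-hand side. Concretely, writing $z=te^{i\theta}$, on the circle $t\mathbb{T}$ the arc-length element is $|dz|=t\,d\theta$, and the polar form of the gradient gives
$$|\nabla\Theta|^2=(\partial_t\Theta)^2+\frac{1}{t^2}(\partial_\theta\Theta)^2\ge\frac{1}{t^2}(\partial_\theta\Theta)^2,\qquad |\nabla\rho|^2=(\partial_t\rho)^2+\frac{1}{t^2}(\partial_\theta\rho)^2\ge(\partial_t\rho)^2.$$
Thus $|\nabla\Theta|$ dominates the tangential derivative $t^{-1}|\partial_\theta\Theta|$, while $|\nabla\rho|$ dominates the radial derivative $|\partial_t\rho|$; these are precisely the components whose integrals are pinned down by the annular geometry.

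For \eqref{Theta} I would first fix a continuous single-valued determination of the argument $\Theta$ along the loop $\theta\mapsto te^{i\theta}$. Since $h$ is an orientation-preserving homeomorphism carrying the inner boundary to the inner boundary, its restriction to $t\mathbb{T}$ is a Jordan curve in $\mathbb{A}(1,R)$ separating the two boundary circles, hence of winding number $+1$ about the origin; consequently this determination increases by exactly $2\pi$ over $[0,2\pi]$. Then
$$\int_{t\mathbb{T}}|\nabla\Theta|\,|dz|=\int_0^{2\pi}|\nabla\Theta|\,t\,d\theta\ge\int_0^{2\pi}|\partial_\theta\Theta|\,d\theta\ge\left|\int_0^{2\pi}\partial_\theta\Theta\,d\theta\right|=2\pi.$$
Equality forces both steps to be tight: the first requires $\partial_t\Theta\equiv0$ (purely tangential gradient), and the second requires $\partial_\theta\Theta$ to keep a constant sign. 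If this happens for every $t$, then $\Theta$ is independent of $t$ and equals a strictly monotone $\varphi(\theta)$, so that $h/|h|=e^{i\varphi(\theta)}$; monotonicity together with total increment $2\pi$ makes $\varphi$ a diffeomorphism $\varphi:[0,2\pi]\onto[\alpha,2\pi+\alpha]$, the asserted form of the angular part.

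For \eqref{Rho} I would fix $\theta$ and integrate along the radial segment $t\mapsto te^{i\theta}$, $t\in[1,r]$, which by the boundary-order hypothesis joins the inner boundary (where $\rho=1$) to the outer boundary (where $\rho=R$). Using the pointwise estimate above and the fundamental theorem of calculus,
$$\int_1^r\frac{|\nabla\rho(te^{i\theta})|}{\rho(te^{i\theta})}\,dt\ge\int_1^r\frac{|\partial_t\rho|}{\rho}\,dt\ge\left|\int_1^r\partial_t\log\rho\,dt\right|=\big|\log\rho(re^{i\theta})-\log\rho(e^{i\theta})\big|=\log R.$$
Equality now forces $\partial_\theta\rho\equiv0$ (purely radial gradient) and $\partial_t\rho$ sign-definite, so $\rho$ depends on $t$ alone, i.e. $\rho(te^{i\theta})=\rho(t)$, as claimed.

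The routine parts here are the two pointwise gradient estimates and the one-variable integrations. The main obstacle I anticipate is the bookkeeping of the angular increment: one must justify rigorously that a continuous branch of $\arg h$ along $t\mathbb{T}$ returns shifted by exactly $+2\pi$, which is a degree/winding argument resting on $h$ being an orientation- and boundary-order-preserving homeomorphism of the annulus, and then upgrade the two pointwise equality conditions (vanishing orthogonal component together with a sign-definite tangential, resp. radial, derivative) to the stated global structure $h/|h|=e^{i\varphi(\theta)}$ and $\rho=\rho(t)$. I expect the sign-definiteness step to be where the diffeomorphism hypothesis (nonvanishing Jacobian) is genuinely needed, in order to exclude the relevant derivative from changing sign.
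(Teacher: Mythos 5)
Your proposal is correct and follows essentially the same route as the paper: the identical polar decompositions $|\nabla\Theta|\ge t^{-1}|\partial_\theta\Theta|$ and $|\nabla\rho|\ge|\partial_t\rho|$, the same one-variable integrations against the boundary data, and the same equality analysis ($\partial_t\Theta\equiv 0$, resp. $\partial_\theta\rho\equiv 0$, with the nonvanishing Jacobian forcing strict monotonicity of $\varphi$). The only cosmetic difference is in the angular lower bound, which you obtain from a winding-number count for a continuous branch of the argument, whereas the paper observes that $\theta\mapsto e^{i\Theta(te^{i\theta})}$ is a surjection onto $\mathbb{T}$ so its length is at least $2\pi$ --- two phrasings of the same topological fact about curves separating the boundary components of the annulus.
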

\begin{proof}[Proof of Proposition~\ref{provo}]
 First of all, for fixed $t$,  $\gamma(\theta)=e^{i\Theta(t e^{i\theta})}$ is a surjection of $[0,2\pi]$ onto $\mathbb{T}=\{z:|z|=1\}$. Further $$|\nabla \Theta(t e^{i\theta})|^2=|\Theta_t|^2+\frac{|\Theta_\theta|^2}{t^2}. $$

So \begin{equation}\label{gthe}|\gamma'(\theta)|=|\Theta_\theta|\le t |\nabla \Theta(t e^{i\theta})|.\end{equation} The equality is attained in \eqref{gthe} if and only if $\Theta_t\equiv 0$. In this case $\gamma(\theta)=e^{i\varphi(\theta)}$, for a smooth function of $\varphi:[0,2\pi] \onto [\alpha,2\pi+\alpha]$.

We obtain that $$|\mathbb{T}|=2\pi\le  \int_0^{2\pi}|\gamma'(\theta)|d\theta\le \int_{t\mathbb{T}}|\nabla \Theta(z)| |dz|,$$ with an equality if and only if $\Theta(se^{i\theta})$ does not  depend on $t$.  Thus the first statement of the proposition is proved.

Similarly the function $\alpha(t)=\log\rho(t e^{i\theta})$ is a surjection of $[1,r]$ onto $[0,\log R]$ and hence $$\log R= \int_1^r \alpha'(t)dt\le \int_1^r \frac{|\nabla \rho(te^{i\theta})|}{\rho(te^{i\theta})}dt.$$ The equality statement can be proved in the same way as the former part. We only need to use the formula
$$|\nabla \rho(t e^{i\theta})|^2=|\rho_t|^2+\frac{|\rho_\theta|^2}{t^2}\ge |\rho_t|^2. $$
\end{proof}
\begin{proof}[Proof of Theorem~\ref{th1}]
Assume as before that $h(z)=\rho(z)e^{i\Theta(z)}$ is a mapping from the annulus  $\mathbb{A}$ onto the annulus $\mathbb{A}^{*}$.
We start from the following inequality which follows from H\"older inequality

$$\mathscr{F}_p[h]=\int_{\A(1,r)}\frac{\|Dh\|^p}{|h|^p}\ge \frac{\left(\int_{\A(1,r)}\frac{\|Dh\|}{|h|} \cdot \frac{\|Dh_\circ\|^{p-1}}{|h_\circ|^{p-1} }\right)^p}{\left(\int_{\A(1,r)}\frac{\|Dh_\circ\|^p}{|h_\circ|^p}\right)^{p-1}} .$$

In view of \eqref{eq-1.1}
$$
\|Dh\|^{2}=|\nabla \rho|^2+\rho^2|\nabla \Theta|^2,
$$
where $\rho(z)=|h(z)|$.
And thus
$$\frac{\|Dh\|}{|h|}=\left({|\nabla \Theta|^2}+\frac{|\nabla \rho |^2}{\rho^2}\right)^{1/2}.$$ Then by \eqref{prove}, for $q=1$ we have

\begin{equation}\label{oni}
\frac{\|Dh\|}{|h|}\ge \left(\sqrt{1-g(t)}|\nabla \Theta|+\sqrt{g(t)}\frac{|\nabla \rho|}{\rho}\right).
\end{equation}
From \eqref{oni} we get
\[\begin{split}\int_{\A(1,r)}\frac{\|Dh\|}{|h|} &\cdot \frac{\|Dh_\circ\|^{p-1}}{|h_\circ|^{p-1} }
\\&=  \int_{\A(1,r)} \left({|\nabla \Theta|^2}+\frac{|\nabla \rho |^2}{\rho^2}\right)^{1/2} \cdot \frac{\|Dh_\circ\|^{p-1}}{|h_\circ|^{p-1} }
\\&\ge \int_0^{2\pi}\int_1^r  t\frac{\|Dh_\circ\|^{p-1}}{|h_\circ|^{p-1} }\left(\sqrt{1-g(t)}|\nabla \Theta|+\sqrt{g(t)}\frac{|\nabla \rho|}{\rho}\right)dtd\theta.
\end{split}\]
Let $$K(t) = t\sqrt{g(t)}\frac{\|Dh_\circ\|^{p-1}}{|h_\circ|^{p-1}}.$$ Then $$K(t)=t \sqrt{g(t)} \left(\frac{1}{t^2}+\frac{g(t)}{t^2 (1-g(t))}\right)^{\frac{1}{2} (p-1)}=P(t).$$ Thus we again use \eqref{PP} to conclude that $K(t)=c(r,R)$. Furthermore
\[\begin{split}t\frac{\|Dh\|}{|h|} \cdot \frac{\|Dh_\circ\|^{p-1}}{|h_\circ|^{p-1}}&\ge t  \left({t^2 (1-g(t))}\right)^{\frac{1}{2} (1-p)}\left[\sqrt{1-g(t)}|\nabla \theta|+\sqrt{g(t)}\frac{|\nabla \rho|}{\rho}\right]
\\&=t^{2-p}   \left(1-g(t))\right)^{1-p/2}|\nabla \Theta|+c(r,R) \frac{|\nabla \rho|}{\rho}.\end{split}\]
Now by Proposition~\ref{provo} we have
$$\int_{\A}  \frac{|\nabla \rho|}{|\rho|}\ge 2\pi \log R$$
and $$t\int_0^{2\pi}|\nabla \Theta(t e^{i\theta})|d\theta\ge 2\pi.$$
So we have
$$\int_{\A(1,r)}\frac{\|Dh\|}{|h|} \cdot \frac{\|Dh_\circ\|^{p-1}}{|h_\circ|^{p-1} }\ge 2\pi \left(c \log R +\int_1^r t^{2-p}   (1-g(t))^{1-p/2}dt\right)=\mathscr{F}_p[h_\circ].
$$
Thus $$\mathscr{F}_p[h]\ge \frac{\mathscr{F}^p_p[h_\circ]}{\mathscr{F}_p^{p-1}[h_\circ]}=\mathscr{F}_p[h_\circ].$$
The uniqueness part of this theorem follows from Proposition~\ref{provo}. The equation in \eqref{oni} is satisfied if and only if $$\frac{\rho(te^{i\theta})|\nabla \Theta(te^{i\theta})|}{|\nabla \rho(te^{i\theta})|}$$ is a function that depends only on $t$. Since $\Theta(\theta)=e^{i\varphi(\theta)}$, we get $|\nabla \Theta(\theta)|=\varphi'(\theta)=\mathrm{const}$. Because $\varphi:[0,2\pi]\onto [\alpha, 2\pi+\alpha]$, it follows that $\varphi(\theta)=\theta+\alpha$. In other words $h(z)$ is a minimizer if and only if $h(z) = H_\circ(t)e^{i(\theta+\alpha)}=e^{i\alpha} h_\circ(z)$. This finishes the proof.
\end{proof}
\begin{proof}[Proof of Theorem~\ref{th2}]
The proof  of Theorem~\ref{th2} is the same as the proof of Theorem~\ref{th1} up to the part concerning the existence of the radial solutions given in Section~\ref{sect} (See Lemma~\ref{mela}).
\end{proof}

\end{document}